\definecolor{labelkey}{rgb}{0,0.08,0.45}
\definecolor{refkey}{rgb}{0,0.6,0.0}
\definecolor{Brown}{rgb}{0.45,0.0,0.05}
\definecolor{lime}{rgb}{0.00,0.8,0.0}
\definecolor{lblue}{rgb}{0.5,0.5,0.99}
\definecolor{myblue}{rgb}{.9, .9, 1} 
  \newcommand*\mybluebox[1]{%
    \colorbox{myblue}{\hspace{1em}#1\hspace{1em}}}
\newcommand{\sepp}{\setlength{\itemsep}{-2pt}}
\newcommand{\menge}[2]{\left\{{#1}~\big |~{#2}\right\}}
\newcommand{\scal}[2]{\left\langle {#1},{#2} \right\rangle}
\newcommand{\NN}{\ensuremath{\mathbb N}}
\newcommand{\nnn}{\ensuremath{{n\in{\mathbb N}}}}
\newcommand{\RR}{\ensuremath{\mathbb R}}
\newcommand{\ba}{\ensuremath{\mathbf{a}}}
\newcommand{\bb}{\ensuremath{\mathbf{b}}}
\newcommand{\bx}{\ensuremath{\mathbf{x}}}
\newcommand{\bg}{\ensuremath{\mathbf{g}}}
\newcommand{\bA}{\ensuremath{{\mathbf{A}}}}
\newcommand{\bB}{\ensuremath{{\mathbf{B}}}}
\newcommand{\bE}{\ensuremath{{\mathbf{E}}}}
\newcommand{\bT}{\ensuremath{{\mathbf{T}}}}
\newcommand{\bX}{\ensuremath{{\mathbf{X}}}}
\newcommand{\TAB}{T_{(A,B)}}
\newcommand{\epi}{\ensuremath{\operatorname{epi}}}
\newcommand{\Id}{\ensuremath{\operatorname{Id}}}
\crefname{equation}{}{equations}
\crefname{chapter}{Appendix}{chapters}
\crefname{item}{}{items}
\crefname{figure}{Figure}{figures}
\def\th@plain{%
  \thm@notefont{}%
  \itshape %
}
\def\th@definition{%
  \thm@notefont{}%
  \normalfont %
}
\newtheorem{theorem}{Theorem}[section]
\newtheorem{thm}[theorem]{Theorem}
\newtheorem{lemma}[theorem]{Lemma}
\newtheorem{lem}[theorem]{Lemma}
\newtheorem{corollary}[theorem]{Corollary}
\theoremstyle{definition}
\newtheorem{example}[theorem]{Example}
\theoremstyle{definition}
\newtheorem{remark}[theorem]{Remark}
\providecommand{\abs}[1]{\lvert#1\rvert}
\providecommand{\norm}[1]{\lVert#1\rVert}
\providecommand{\normsq}[1]{\lVert#1\rVert^2}
\providecommand{\bk}[1]{\left(#1\right)}
\providecommand{\stb}[1]{\left\{#1\right\}}
\providecommand{\innp}[1]{\langle#1\rangle}
\providecommand{\RA}{\Rightarrow}
\providecommand{\RR}{\mathbb{R}}
\newcommand{\fix}{\ensuremath{\operatorname{Fix}}}
\providecommand{\epi}{\operatorname{epi}}
\providecommand{\Id}{\operatorname{{ Id}}}
\providecommand{\fady}{\varnothing}
\providecommand{\NN}{\mathbb{N}}
\providecommand{\fix}{\operatorname{Fix}}
\providecommand{\Id}{\operatorname{Id}}
\providecommand{\fady}{\varnothing}
\providecommand{\RR}{\mathbb{R}}
\providecommand{\NN}{\mathbb{N}}
\def\namedlabel#1#2{\begingroup
   \def\@currentlabel{#2}%
   \label{#1}\endgroup
}
\begin{document}

\title{The Douglas--Rachford algorithm\\ in the affine-convex case}

\author{
Heinz H.\ Bauschke\thanks{
Mathematics, University of British Columbia, Kelowna, B.C.\ V1V~1V7, Canada. 
E-mail: \texttt{heinz.bauschke@ubc.ca}. The research of all
authors was supported by NSERC and by the Canada Research Chair
program.},~
Minh N.\ Dao\thanks{
 Mathematics, University of British Columbia, Kelowna, B.C.\ V1V~1V7, Canada,
and
Department of Mathematics and Informatics,
Hanoi National University of Education, 136 Xuan Thuy, Hanoi, Vietnam.
E-mail: \texttt{minhdn@hnue.edu.vn}.}, 
~and Walaa M.\ Moursi\thanks{
Mathematics, University of British Columbia, Kelowna, B.C.\ V1V~1V7, Canada,
and 
Mansoura University, Faculty of Science, Mathematics Department, 
Mansoura 35516, Egypt. 
E-mail: \texttt{walaa.moursi@ubc.ca}.}
}

\date{May 23, 2015}

\maketitle

\begin{abstract}
\noindent
The Douglas--Rachford algorithm is a simple yet effective
method for solving convex feasibility problems. However, if
the underlying constraints are inconsistent, then the 
convergence theory is incomplete. 
We provide convergence results when one constraint is an
affine subspace. As a consequence, we extend a result by Spingarn
from halfspaces to general closed convex sets admitting least-squares solutions. 
\end{abstract}
{\small
\noindent
{\bfseries 2010 Mathematics Subject Classification:}
{Primary 
90C25;
Secondary 
49M27, 
65K05, 
65K10. 
}

\noindent {\bfseries Keywords:}
Affine subspace,
convex feasibility problem, 
Douglas--Rachford splitting operator,
halfspace,
least-squares solution,
normal cone operator,
projection, 
Spingarn's method. 
}

\section{Introduction}

We shall assume throughout this paper that
$X$ is a real Hilbert space 
with inner product $\innp{\cdot,\cdot}$ and induced norm $\norm{\cdot}$, 
and that 
\begin{empheq}[box=\mybluebox]{equation}
A \text{~and~} B \text{~~are nonempty closed convex (not
necessarily intersecting) subsets of~~} X.
\end{empheq} 
Consider the problem of finding a best approximation pair relative to $A$ and $B$ (see \cite{BCL04}, \cite{Luk08}), 
that is to 
\begin{empheq}[box=\mybluebox]{equation}
\text{find~~} (a, b) \in A\times B \text{~~such that~~} \|a - b\| =\inf\|A - B\|.
\end{empheq}
Recall that the Douglas--Rachford splitting operator \cite{LM79} 
for the ordered pair of sets $(A, B)$ is defined by
\begin{empheq}[box=\mybluebox]{equation}
\label{def:T}
T =\TAB :=\tfrac{1}{2}(\Id +R_BR_A) =\Id -P_A +P_B R_A,
\end{empheq} 
where $P_A$ is the projector onto $A$
and $R_A :=2P_A-\Id$ is the reflector onto $A$.
Let $x \in X$. In the consistent case, 
when $Z_{A,B}:=A\cap B\neq \fady $,
the \emph{``governing sequence"} $(T^n x)_{n\in \NN}$
generated by iterating
 the Douglas--Rachford operator 
converges weakly to a fixed point\footnote{$\fix T 
=\menge{x\in X}{x=Tx}$ is the set of fixed points of $T$.}
of $T$ (see \cite{LM79}),
and the \emph{``shadow sequence"} $(P_AT^n x)_{n\in \NN}$ 
converges weakly to a point in $A\cap B$ (see \cite{Sva11} or
\cite[Theorem~25.6]{BC01}). 
For further information on the Douglas--Rachford algorithm (DRA), 
see also \cite{LM79} and \cite{EB92}.

In \cite{BCL04}, the authors showed that 
in the inconsistent case, when $A\cap B= \fady$, 
$(P_AT^n x)_{n\in \NN}$ remains bounded with 
the weak cluster points
of $(P_AT^nx, P_BP_AT^nx)_\nnn$
 being best approximation 
pairs relative to $A$ and $B$ 
whenever $g:=P_{\overline{B -A}}0 \in B -A$.
\emph{The goal of this paper is to study the case 
 when $A\cap B$ is possibly empty in the setting that
one of the sets $A$ and $B$ is a closed affine subspace 
of $X$.}
Our results show that the shadow sequence will always converge  
to a best approximation solution in $A\cap (B -g)$. As a consequence 
 we obtain a far-reaching refinement of 
Spingarn's splitting method introduced in \cite{Spi87}.

\section{Main results}
We start with the following key lemma, which is well known when
$A=X$. 

\begin{lem}\label{Great:prop}
Let $A$ be a closed linear subspace of $X$, let $C$ 
be a nonempty closed convex subset of $A$,
and
let $(x_n)_\nnn$ be a sequence in $X$.
Suppose that $(x_n)_\nnn$ is Fej\'{e}r monotone with respect to 
$C$, i.e., $(\forall \nnn)$ $(\forall c\in C)$ 
$\norm{x_{n+1}-c}\le \norm{x_{n}-c}$, 
and that all its weak cluster points of $(P_Ax_n)_\nnn$
lie in $C$. Then 
$
(P_A x_n)_\nnn
$ converges weakly to some point in $C$.
 \end{lem}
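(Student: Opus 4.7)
The plan is to apply Opial's lemma (the standard Fej\'er monotone convergence criterion) to the shadow sequence $(P_Ax_n)_\nnn$. Boundedness is immediate: Fej\'er monotonicity of $(x_n)_\nnn$ with respect to the nonempty set $C$ forces $(x_n)_\nnn$ to be bounded, and since $P_A$ is nonexpansive, $(P_Ax_n)_\nnn$ is bounded as well. The hypothesis already gives that every weak cluster point of $(P_Ax_n)_\nnn$ lies in $C$. What remains is to show that there is at most one such cluster point.

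The workhorse for uniqueness is the Pythagorean identity available because $A$ is a \emph{linear} subspace: for every $c\in C\subseteq A$,
\begin{equation*}
\norm{x_n-c}^2 = \norm{x_n-P_Ax_n}^2 + \norm{P_Ax_n-c}^2,
\end{equation*}
since $x_n-P_Ax_n\perp A$ and $P_Ax_n-c\in A$. Let $p,q\in C$ be two weak cluster points of $(P_Ax_n)_\nnn$. By Fej\'er monotonicity, both $\norm{x_n-p}$ and $\norm{x_n-q}$ converge in $\RR$. Subtracting the two versions of the identity above, the $\norm{x_n-P_Ax_n}^2$ term cancels, so $\norm{P_Ax_n-p}^2-\norm{P_Ax_n-q}^2$ has a limit. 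Expanding this difference reduces it to a constant plus $2\innp{P_Ax_n,q-p}$, showing that $\lim_n \innp{P_Ax_n,q-p}$ exists.

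Now I evaluate this scalar limit along two subsequences, one extracting $p$ as a weak limit and one extracting $q$: the two limits must agree, yielding
\begin{equation*}
\innp{p,q-p} = \innp{q,q-p}, \quad\text{hence}\quad \norm{q-p}^2 = 0.
\end{equation*}
So $p=q$, and combined with boundedness and the fact that all weak cluster points are in $C$, this forces $(P_Ax_n)_\nnn$ to converge weakly to a single point of $C$.

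The only real subtlety is the use of linearity of $A$ in the Pythagorean splitting: without it, the cross term $\innp{x_n-P_Ax_n, P_Ax_n-c}$ would not vanish uniformly in $c\in C$, and one could not cancel the nonmonotone quantity $\norm{x_n-P_Ax_n}$ when comparing limits across different targets in $C$. Everything else is a routine application of Opial.
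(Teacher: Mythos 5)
Your proof is correct and follows essentially the same route as the paper's: boundedness via Fej\'er monotonicity and nonexpansiveness of $P_A$, then uniqueness of the weak cluster point by showing that $\innp{P_Ax_n,\,c_1-c_2}$ converges for any two cluster points and evaluating along two subsequences. The only cosmetic difference is that you obtain this convergence via the Pythagorean splitting $\norm{x_n-c}^2=\norm{x_n-P_Ax_n}^2+\norm{P_Ax_n-c}^2$, whereas the paper passes from $\innp{x_n,c_1-c_2}$ to $\innp{P_Ax_n,c_1-c_2}$ using the self-adjointness of the linear projector $P_A$ --- the same exploitation of the linearity of $A$ in an equivalent guise.
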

 
\begin{proof}
 Since $(x_n)_\nnn$ is bounded (by e.g., \cite[Proposition~5.4(i)]{BC01})
  and $P_A$ is (firmly) nonexpansive 
  we learn that $(P_Ax_n)_\nnn$ is bounded
  and by assumption, its weak cluster points lie in $C\subseteq A$. 
Now let $c_1$ and $c_2$ be in $C$. On the one hand the
Fej\'{e}r monotonicity of $(x_n)_\nnn$  implies the convergence of the 
 sequences
 $(\normsq{x_n-c_1})_\nnn$ and $(\normsq{x_n-c_2})_\nnn$
 by e.g., \cite[Proposition~5.4(ii)]{BC01}.
On the other hand, expanding and simplifying yield
$
\normsq{x_n-c_1}-\normsq{x_n-c_2}
=\normsq{x_n}+\normsq{c_1}-2\innp{x_n,c_1}
-\normsq{x_n}-\normsq{c_2}+2\innp{x_n,c_2}
=\normsq{c_1}-2\innp{x_n,c_1-c_2}-\normsq{c_2},
$
which in turn implies that  
$(\innp{x_n,c_1-c_2})_\nnn$ converges.
Since $c_1\in A$ and $c_2\in A$ we have
\begin{equation}\label{eq:gold}
\innp{x_n,c_1-c_2}=\innp{x_n,P_Ac_1-P_Ac_2}=\innp{x_n,P_A(c_1-c_2)}
=\innp{P_A x_n,c_1-c_2}. 
\end{equation}
Now assume that $(P_A x_{k_n})_\nnn$ and $(P_A x_{l_n})_\nnn$
are subsequences of $(P_A x_n)_\nnn$
such that $P_A x_{k_n} \rightharpoonup c_1$
and 
$P_A x_{l_n} \rightharpoonup c_2$.
By the uniqueness of the limit in \cref{eq:gold}
we conclude that $\innp{c_1,c_1-c_2}=\innp{c_2,c_1-c_2}$
or equivalently $\normsq{c_1-c_2}=0$, hence 
$(P_A x_{n})_\nnn$ has a unique weak cluster point 
which completes the proof.
 \end{proof}

From now on we work under the assumption that
\begin{empheq}[box=\mybluebox]{equation}
\label{def:g}
g=g_{(A,B)} :=P_{\overline{B -A}}0 \in B -A.
\end{empheq} 
In view of \cref{def:g} we have
\begin{empheq}[box=\mybluebox]{equation}
\label{defn:E}
E=E_{(A,B)} :=A\cap (B -g) \neq\fady \quad\text{and}\quad 
F =F_{(A,B)}:=(A +g)\cap B \neq\fady.
\end{empheq} 
For sufficient conditions
on when $g\in B-A$ 
(or equivalently the sets $E$ and $F$ are nonempty) 
we refer the reader to \cite[Facts~5.1]{BB94}.
\begin{lemma}
\label{l:aux}
Let $x\in X$. Then the following hold:
\begin{enumerate}
\item
\label{l:aux_gap}
If $C\in\stb{A,B}$ is a closed affine subspace of $X$, 
 then $g\in (C-C)^\perp$.
\item
\label{l:aux_Fejer}
The sequence $(T^n x-ng)_\nnn$
is Fej\'{e}r monotone with respect to $E$. 
\item
\label{l:aux_PA}
The sequence $(P_AT^n x)_\nnn$ is bounded
and its weak cluster points lie in $E$.
\item
\label{l:aux_PB}
If $B$ is a closed affine subspace, 
then $P_BT^n x -P_AT^n x \to g$, 
the sequence $(P_BT^n x)_\nnn$ is bounded
and all weak cluster points lie in $F$.
\item
\label{l:aux_single}
If $E=\stb{\bar x}$ and hence $F=\stb{\bar x +g}$, 
then
$P_AT^n x\rightharpoonup \bar x$ and $P_BT^n x\rightharpoonup \bar x +g$. 
\end{enumerate}
\end{lemma}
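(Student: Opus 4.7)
The plan is to prove the five items sequentially, using as a common thread the identity $Te = e + g$ for every $e\in E$: since $e\in A$ we have $P_Ae = e$ and $R_Ae = e$, and $(e,e+g)$ is a best-approximation pair (because $\|e-(e+g)\| = \|g\| = \dist(A,B)$), so $P_Be = e + g$.

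For (i), let $C\in\{A,B\}$ be affine with parallel subspace $V := C - C$. The key observation is that $B - A$ is $V$-invariant: for $C = A$ and $v = a_2 - a_1 \in V$, affineness gives $(b - a) + v = b - (a + a_1 - a_2) \in B - A$; the case $C = B$ is symmetric. Hence $g + V \subseteq \overline{B - A}$, and since $g$ is the minimum-norm element, the quadratic $v\mapsto\|g+v\|^2$ on $V$ is minimized at $v = 0$, forcing $g \perp V$.

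For (ii), applying firm nonexpansiveness of $T$ with $z_1 = T^nx$ and $z_2 = e$ (and $Te = e + g$) gives
\begin{equation*}
\|T^{n+1}x - e - g\|^2 + \|T^{n+1}x - T^nx - g\|^2 \;\le\; \|T^nx - e\|^2.
\end{equation*}
Expanding the desired inequality $\|T^{n+1}x - (n+1)g - e\|^2 \le \|T^nx - ng - e\|^2$ and subtracting the above reduces the task to verifying
\[
2n\,\langle T^{n+1}x - T^nx - g,\,g\rangle + \|T^{n+1}x - T^nx - g\|^2 \;\ge\; 0,
\]
which holds because $T^{n+1}x - T^nx = P_BR_AT^nx - P_AT^nx \in B - A$ and the projection characterization $g = P_{\overline{B-A}}0$ yields $\langle v - g, g\rangle \ge 0$ for every $v \in \overline{B - A}$.

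For (iii)--(v), Fej\'er monotonicity from (ii) bounds $(T^nx - ng)_n$, and summing the inequality above gives $\sum_n\|T^{n+1}x - T^nx - g\|^2 < \infty$, so $T^{n+1}x - T^nx \to g$ strongly. For (iii), when $A$ is affine, (i) yields $P_AT^nx = P_A(T^nx - ng)$, giving boundedness of $(P_AT^nx)_n$; any weak cluster $a^\star$ lies in $A$, and $P_BR_AT^{n_k}x = (T^{n_k+1}x - T^{n_k}x) + P_AT^{n_k}x \rightharpoonup a^\star + g$ together with weak closedness of $B$ places $a^\star + g \in B$, i.e.\ $a^\star \in E$. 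For (iv), when $B$ is affine, $R_AT^nx - P_BR_AT^nx \in (\parl B)^\perp$ rearranges to $P_AT^nx - T^{n+1}x \in (\parl B)^\perp$, and one obtains
\[
P_BT^nx - P_AT^nx \;=\; (T^{n+1}x - T^nx) - 2P_{\parl B}(T^{n+1}x - T^nx);
\]
since $T^{n+1}x - T^nx \to g$ and $P_{\parl B}g = 0$ by (i), this tends strongly to $g$, and the boundedness and cluster-point statement for $(P_BT^nx)_n$ follow (boundedness comes directly from affineness of $B$ combined with boundedness of $T^nx - ng$). Finally, (v) is immediate: in a Hilbert space a bounded sequence whose weak cluster points all coincide with a prescribed element converges weakly to that element, applied once to $(P_AT^nx)_n$ and once to $(P_BT^nx)_n$. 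The main obstacle is the algebraic manipulation in (ii): identifying the right combination of firm nonexpansiveness against $e$ and the variational inequality for $g$ so that the $n$-dependent cross term is nonnegative; once that step is in place, (iii)--(v) follow along standard Fej\'er/demi-closedness lines.
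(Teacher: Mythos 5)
Your proposal is a genuinely different route from the paper's: the paper proves this lemma almost entirely by citation (items (i)--(iv) are delegated to results of Bauschke--Combettes--Luke \cite{BCL04}), whereas you rebuild the machinery from scratch. Most of it holds up. The observation $Te=e+g$ for $e\in E$ is correct (since $e\in A$ gives $Te=P_Be$, and $e+g\in B$ attains the distance $\|g\|=\inf\|A-B\|$). Your proof of (i) via the $V$-invariance of $B-A$ and minimality of $\|g+v\|^2$ on the subspace $V$ is clean and correct. Your derivation of (ii) from firm nonexpansiveness of $T$ against $e$, combined with the variational inequality $\scal{v-g}{g}\ge 0$ for $v=T^{n+1}x-T^nx\in B-A$, checks out exactly; this is a nice self-contained replacement for the appeal to \cite[Theorem~3.5 and Remark~3.15]{BCL04}. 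The telescoping consequence $T^{n+1}x-T^nx\to g$, the identity $P_BT^nx-P_AT^nx=(T^{n+1}x-T^nx)-2P_{\parl B}(T^{n+1}x-T^nx)$ in (iv), and the cluster-point arguments via weak closedness of $A$ and $B$ are all correct.

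The genuine gap is in (iii). As stated, (iii) asserts boundedness of $(P_AT^nx)_\nnn$ for \emph{arbitrary} closed convex $A$ and $B$ (this is \cite[Theorem~3.13(iii)(b)]{BCL04}, whose proof of boundedness is genuinely delicate), but your argument establishes boundedness only under the additional hypothesis that $A$ is a closed affine subspace, via $P_AT^nx=P_A(T^nx-ng)$. That identity fails badly for general convex $A$ --- the paper's own Example~2.7 is built precisely to exhibit $P_A(T^nx-ng)\neq P_AT^nx$ when $A=[1,2]$ --- and nonexpansiveness of $P_A$ against the bounded sequence $(T^nx-ng)_\nnn$ does not save you, since $\|P_AT^nx-e\|\le\|T^nx-e\|$ has an unbounded right-hand side. (Your cluster-point argument for (iii) is fine in full generality; only the boundedness claim is affected.) The restriction propagates to the $P_A$ half of (v), which needs boundedness of $(P_AT^nx)_\nnn$ to extract weak limits. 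For the paper's main results this is harmless, since (iii) is only invoked when $A$ is affine, but as a proof of the lemma as written you should either import the general boundedness result from \cite{BCL04} or explicitly weaken the statement of (iii).
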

\begin{proof}
\ref{l:aux_gap}: See \cite[Corollary~2.7 and Remark~2.8(ii)]{BCL04}.
\ref{l:aux_Fejer}: 
It follows\footnote{
We use $N_C$ to denote the \emph{normal cone} operator
associated with a nonempty closed convex subset $C$ of $X$.
}
 from 
\cite[Theorem~3.5]{BCL04}
that $E+N_{\overline{A-B}}(-g)\subseteq 
\fix (-g+T) :=\menge{x \in X}{x =-g +Tx} 
\subseteq -g+E+N_{\overline{A-B}}(-g)$.
Consequently, $E\subseteq \fix (-g+T) $.
Moreover, \cite[Remark~3.15]{BCL04}
implies that the sequence $(T^n x-ng)_\nnn$
is Fej\'{e}r monotone with respect to $\fix (-g+T)$.
\ref{l:aux_PA}: See \cite[Theorem~3.13(iii)(b)]{BCL04}.
\ref{l:aux_PB}: See \cite[Theorem~3.17]{BCL04}.
\ref{l:aux_single}: This follows from \ref{l:aux_PA} and \ref{l:aux_PB}.
\end{proof}

We are now ready for our main results.
\begin{thm}[convergence of DRA 
when $A$ is a closed affine subspace]
\label{t:affcvx}
Suppose that $A$ is a closed affine subspace of $X$,
and let $x\in X$.  
Then the following hold:
\begin{enumerate}
\item\label{t:affcvx:i}
The \emph{shadow sequence} 
$(P_A T^n x)_\nnn$ 
converges weakly to some  
point in
$E=A\cap (B -g)$. 
\item\label{t:affcvx:ii}
No general conclusion can be 
drawn about the sequence 
$(P_B T^n x)_\nnn$. 
\end{enumerate}
\end{thm}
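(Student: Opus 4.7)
For part (i), I would apply Lemma~\ref{Great:prop} after translating so that the affine subspace $A$ becomes a linear subspace. Fix any $a\in A$ and let $A_0 := A - a$ be the parallel closed linear subspace. Set $z_n := T^n x - ng - a$. By Lemma~\ref{l:aux}\ref{l:aux_Fejer}, $(T^n x - ng)_\nnn$ is Fej\'{e}r monotone with respect to $E$, so $(z_n)_\nnn$ is Fej\'{e}r monotone with respect to the nonempty closed convex set $C := E - a \subseteq A_0$.

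The pivot of the argument is the identity $P_{A_0} z_n = P_A T^n x - a$. This follows from the standard formula $P_A y = a + P_{A_0}(y - a)$ for affine projections together with Lemma~\ref{l:aux}\ref{l:aux_gap}, which yields $g \in (A-A)^\perp = A_0^\perp$, hence $P_{A_0} g = 0$ and therefore $P_A(T^n x - ng) = P_A T^n x$. Combined with Lemma~\ref{l:aux}\ref{l:aux_PA}, every weak cluster point of $(P_{A_0} z_n)_\nnn$ lies in $C$. Lemma~\ref{Great:prop} then yields $P_{A_0} z_n \weak c$ for some $c \in C$, i.e., $P_A T^n x \weak a + c \in E$. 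The main obstacle here is the bookkeeping in the translation; the orthogonality $g \perp (A-A)$ from Lemma~\ref{l:aux}\ref{l:aux_gap} is doing the heavy lifting, since it is exactly what makes the Fej\'{e}r monotonicity transfer cleanly to $(z_n)_\nnn$ and what forces $P_A(T^n x - ng) = P_A T^n x$.

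For part (ii), I would exhibit an explicit planar counterexample. Take $X = \RR^2$, $A = \RR \times \{-1\}$, and $B = \RP \times \RP$. Then $\overline{B-A} = \RR \times [1,\infty)$, so $g = (0,1) \in B - A$, and $E = \RP \times \{-1\} \neq \emp$. A direct calculation gives $T(x_1,x_2) = (\max(x_1,0),\, \max(x_2+1,-1))$. Iterating from $x = (1,0)$ produces $T^n x = (1,n)$, so $P_B T^n x = (1,n)$ fails even to be bounded, whereas $P_A T^n x = (1,-1)$ converges to a point of $E$ as predicted by~\ref{t:affcvx:i}. Thus no convergence (or even boundedness) statement for the $B$-shadow can hold in the generality of~\ref{t:affcvx:i}.
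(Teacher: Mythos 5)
Your proof of part \ref{t:affcvx:i} is correct and follows essentially the same route as the paper: translate so that $A$ becomes a linear subspace, use $g\in(A-A)^\perp$ from \cref{l:aux}\ref{l:aux_gap} to get $P_A(T^nx-ng)=P_AT^nx$, and then feed the Fej\'er monotone sequence $(T^nx-ng)_\nnn$ and $C=E$ into \cref{Great:prop} via \cref{l:aux}\ref{l:aux_Fejer}--\ref{l:aux_PA} (you merely make the translation bookkeeping explicit where the paper says ``we can and do assume''). For part \ref{t:affcvx:ii} your quadrant example $A=\RR\times\{-1\}$, $B=\RP\times\RP$ is a valid and correctly computed alternative to the paper's epigraph example, and it establishes the same point that $(P_BT^nx)_\nnn$ can be unbounded.
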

\begin{proof}
\ref{t:affcvx:i}: 
After translating the sets $A$ and $B$ by a vector, if necessary,
we can and do assume that $A$ is a closed linear subspace
of $X$.
Using \cref{l:aux}\ref{l:aux_gap} we learn that
$(\forall n\in \NN)$ $P_AT^n x =P_A(T^n x -ng)$.
Note that $E =A\cap(B -g)\subseteq A$.
Now combine \cref{l:aux}\ref{l:aux_Fejer}--\ref{l:aux_PA} 
and \cref{Great:prop} with $C =E$, 
and $(x_n)_\nnn$ replaced by $(T^n x-ng)_\nnn$. 
\ref{t:affcvx:ii}:
In fact, $(P_B T^n x)_\nnn$ 
can be unbounded (see \cref{ex:PB:unbd}) or bounded (e.g., when
$A=B=X$). 
\end{proof}

 \begin{example}\label{ex:PB:unbd}
Suppose that $X=\RR^2$, that $A=\RR\times\stb{0}$ and
that $B=\epi\bk{\abs{\cdot}+1}$. Then $A\cap B=\fady$
and for the starting point $x\in \left [-1,1\right]\times\stb{0} $ we have
$(\forall n\in\stb{1,2,\ldots})$
$T^n x=(0,n) \in B$ 
and therefore
$\norm{P_B T^nx}=\norm{T^nx}=n \to \infty$.
\end{example} 
\begin{proof}
Let $x=(\alpha,0)$ with $\alpha\in \left[-1,1\right]$.
We proceed by induction. When $n=1$ we have
$T(\alpha,0)=P_{A^\perp}(\alpha,0)+P_BR_A(\alpha,0)
=P_B(\alpha,0)=(0,1)$.
Now suppose that for some $(n\in \stb{1,2,\ldots})$
$T^nx=(0,n)$. Then $T^{n+1}x=T(0,n)=
P_{A^\perp}(0,n)+P_BR_A(0,n)=(0,n)+P_B(0,-n)=(0,n+1)\in B$.
\end{proof}

When $B$ is an affine subspace, 
the convergence theory is even more satisfying:

\begin{thm}[convergence of DRA  
when $B$ is a closed affine subspace]
\label{t:affcvx:B}
Suppose that $B$ is a closed affine subspace of $X$, and 
let $x\in X$.
Then the following hold:
\begin{enumerate}
\item\label{t:affcvx:B:ii}
The 
\emph{shadow sequence}  $(P_A T^n x)_\nnn$
converges weakly to some  
point in 
$E=A\cap (B -g)$.
\item\label{t:affcvx:B:i}
The sequence
$(P_B T^n x)_\nnn$ 
converges weakly to some  
point in
$F=(A+g)\cap B$. 
\end{enumerate}
\end{thm}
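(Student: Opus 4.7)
My plan is to first establish part \ref{t:affcvx:B:i} (weak convergence of $(P_BT^nx)_\nnn$) via \cref{Great:prop}, and then to obtain part \ref{t:affcvx:B:ii} (weak convergence of $(P_AT^nx)_\nnn$) as an immediate consequence by subtracting off the strong limit given by \cref{l:aux}\ref{l:aux_PB}. The ordering is dictated by \cref{Great:prop}, which requires the projection in question to be linear: since $B$ (not $A$) is now affine, it is the $B$-shadow that I can attack directly.

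Exactly as in the proof of \cref{t:affcvx}, I translate both $A$ and $B$ by a common vector to reduce, without loss of generality, to the case in which $B$ is a closed linear subspace; since $g$ and the iterates of $T$ are equivariant under simultaneous translation of the two sets, this is harmless. After the translation $P_B$ is linear and, by \cref{l:aux}\ref{l:aux_gap}, $g\in(B-B)^\perp=B^\perp$, so $P_Bg=0$. A one-line manipulation of \cref{defn:E} further gives $F=E+g\subseteq B$.

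To prove \ref{t:affcvx:B:i} I apply \cref{Great:prop} with $(A,C,(x_n)_\nnn)$ there replaced by $(B,F,(y_n)_\nnn)$, where $y_n:=T^nx-(n-1)g$. The shift by $+g$ (relative to the sequence appearing in \cref{l:aux}\ref{l:aux_Fejer}) is engineered precisely so that Fej\'er monotonicity is transported from $E$ to $F$: for any $f=e+g\in F$, one has $\|y_n-f\|=\|(T^nx-ng)-e\|$, so \cref{l:aux}\ref{l:aux_Fejer} forces $(y_n)_\nnn$ to be Fej\'er monotone with respect to $F$. Linearity of $P_B$ combined with $P_Bg=0$ gives $P_By_n=P_BT^nx$, whose weak cluster points lie in $F$ by \cref{l:aux}\ref{l:aux_PB}. \cref{Great:prop} then delivers $P_BT^nx\weak\bar b$ for some $\bar b\in F$.

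Part \ref{t:affcvx:B:ii} is now immediate: \cref{l:aux}\ref{l:aux_PB} asserts $P_BT^nx-P_AT^nx\to g$ strongly, so $P_AT^nx\weak\bar b-g$, and $\bar b\in F=(A+g)\cap B$ places $\bar b-g$ in $A\cap(B-g)=E$. I expect the only delicate point in the whole argument to be the identification of the shift $-(n-1)g$ in the auxiliary sequence: this is what simultaneously makes $(P_By_n)$ coincide with the shadow sequence $(P_BT^nx)$ and repositions the Fej\'er-monotonicity target set inside $B$, so that \cref{Great:prop} becomes applicable. Once this shift is in hand, all remaining steps are routine.
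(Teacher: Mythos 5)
Your argument is correct, but it takes a genuinely different route for the key step. The paper disposes of the $P_B$-part by applying \cref{t:affcvx}\ref{t:affcvx:i} to the \emph{reversed} pair $(B,A)$ (for which the first set is affine, and $g_{(B,A)}=-g$, so the limit lies in $B\cap(A+g)=F$) and then invoking the external result \cite[Corollary~2.8(i)]{BM_order} on the order of the operators in DRA to transfer that conclusion to the iterates of $T=T_{(A,B)}$; the subsequent derivation of the $P_A$-part from the $P_B$-part via \cref{l:aux}\ref{l:aux_PB} is identical to yours. You instead rerun the mechanism of \cref{Great:prop} directly on the $B$-side, and your identification of the correct auxiliary sequence $y_n=T^nx-(n-1)g$ is exactly the point that makes this work: it is the $+g$ translate of the sequence in \cref{l:aux}\ref{l:aux_Fejer}, so Fej\'er monotonicity with respect to $E$ becomes Fej\'er monotonicity with respect to $F=E+g\subseteq B$, while $P_Bg=0$ (from \cref{l:aux}\ref{l:aux_gap} after the reduction to $B$ linear) keeps $P_By_n=P_BT^nx$. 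All hypotheses of \cref{Great:prop} are then supplied by \cref{l:aux}\ref{l:aux_Fejer} and \ref{l:aux_PB}, and the final passage to $P_AT^nx\rightharpoonup\bar b-g\in E$ is sound. What each approach buys: the paper's proof is two lines but leans on a nontrivial outside reference, whereas yours is self-contained within the paper's own lemmas and makes the symmetry between \cref{t:affcvx} and \cref{t:affcvx:B} explicit, at the cost of repeating the translation/shift bookkeeping.
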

\begin{proof}
\ref{t:affcvx:B:i}: 
Combine \cref{t:affcvx}\ref{t:affcvx:i} 
and \cite[Corollary~2.8(i)]{BM_order}.
\ref{t:affcvx:B:ii}:
Combine \ref{t:affcvx:B:i} and
 \cref{l:aux}\ref{l:aux_PB}. 
\end{proof}

It is tempting to conjecture that 
\cref{t:affcvx}\ref{t:affcvx:i} remains true when $A$ is just convex and not
necessarily a subspace. 
While this \emph{statement} may be true\footnote{In
\cite[Remark~3.14(ii)]{BCL04}, the authors claim otherwise but 
forgot to list the assumption that
$A\cap B\neq\varnothing$.}, the \emph{proof}
of \cref{t:affcvx}\ref{t:affcvx:i} does \emph{not} admit such an extension:

 \begin{example}
 Suppose that $X=\RR$, that
 $A=[1,2]$ and that $B=\stb{0}$.
 Then $g=-1$ and $E=\stb{1}$.
 Let $x = 4$. We have 
 $(T^n x)_\nnn =(4, 2, 0, -1, -2, -3, \dots)$, 
 $P_AT^n x\to 1 \in E$ and 
$(\forall n \in \{2, 3, 4, \dots\})$ $T^n x -ng =-(n -2) -n(-1) =
2 \in A$ and $P_A(T^n x- ng) = 2 \in A\smallsetminus E$.  
In the proof of \cref{t:affcvx}\ref{t:affcvx:i}, we
had $(P_AT^n x)_\nnn = (P_A(T^n x -ng))_\nnn$ which is strikingly
false here. 
\end{example}

\section{Spingarn's method}
In this section we discuss the problem to 
find \emph{least-squares solutions} of $\bigcap_{i=1}^M C_i$,
i.e., to 
\begin{empheq}[box=\mybluebox]{equation}
\label{e:moresets}
\text{find minimizers of $\sum_{i=1}^M d^2_{C_i}$,} 
\end{empheq}
where
$C_1, \ldots, C_M$ are nonempty closed convex 
(possibly nonintersecting) subsets of $X$ with corresponding
distance functions $d_{C_1},\ldots,d_{C_M}$.
Now consider the product Hilbert space $\bX :=X^M$, 
with the inner product $((x_1, \dots, x_M), (y_1, \dots, y_M)) 
\mapsto \sum_{i=1}^M \scal{x_i}{y_i}$.
We set 
\begin{equation}
\bA =\menge{(x,\dots, x) \in \bX}{x \in X} 
\quad\text{and}\quad \bB =C_1\times \cdots \times C_M.
\end{equation} 
Then the projections of $\bx =(x_1, \dots, x_M) \in \bX$ onto $\bA$ 
and $\bB$ are given by, respectively, 
$P_\bA\bx =\left( \frac{1}{M}\sum_{i=1}^M x_i, \ldots, 
\frac{1}{M}\sum_{i=1}^M x_i \right)$
and $P_\bB\bx =\left( P_{C_1}x_1, \dots, P_{C_M}x_M \right)$.
Now assume that
\begin{empheq}[box=\mybluebox]{equation}
\label{e:gap'}
\bg =(g_1, \dots, g_M) :=P_{\overline{\bB -\bA}}0 \in \bB -\bA. 
\end{empheq}
Then we have 
\begin{equation}
\label{e:rela}
\bE :=\bA\cap (\bB -\bg) \neq\varnothing, \quad\text{and}\quad
(x, \dots, x) \in \bA\cap (\bB -\bg) 
~\Leftrightarrow~ x \in \bigcap_{j=1}^M (C_j -g_j).
\end{equation}
Using \cite[Section 6]{BB94}, we see that 
the $M$-set problem \cref{e:moresets} is equivalent to the
\emph{two-set} problem
\begin{equation}
\label{e:prob'}
\text{find least-squares solutions of $\bA\cap \bB$}.
\end{equation}
It follows from \cref{e:gap'} and \cref{e:rela} that
$\bg$ is the unique vector in $\overline{\bB-\bA}$ that satisfies
\begin{equation}
\label{e:waterloo}
    \left. \begin{array}{c}
        (w_1,w_2,\ldots,w_M)\neq (g_1,g_2,\ldots,g_M),  \\
       \text{and~~} \bigcap\limits_{j=1}^M (C_j -w_j) \neq\varnothing\\
    \end{array} \right\}\;\; \RA\;\;
    \sum_{j=1}^{M}\normsq{w_j}>\sum_{j=1}^{M}\normsq{g_j}.
\end{equation}
We have the following result for the problem 
of finding a least-squares solution for the intersection of a finite family of sets.
\begin{corollary}
\label{c:moresets}
Suppose that $C_1, \dots, C_M$ are closed convex subsets of $X$.
Let $\bT =\Id -P_\bA +P_\bB R_\bA$, let $\bx \in \bX$
and recall assumption \cref{e:gap'}.
Then the \emph{shadow sequence} $(P_\bA\bT^n\bx)_\nnn$ converges to 
$\bar\bx =(\bar x, \dots, \bar x) \in \bA\cap (\bB -\bg)$, where 
$\bar x \in \bigcap_{j=1}^M (C_j -g_j)$ 
and $\bar x$ is a least-squares solution of \cref{e:moresets}.  
\end{corollary}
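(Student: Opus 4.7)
The plan is to transplant the two-set theorem \cref{t:affcvx} into the product space $\bX = X^M$ with the pair $(\bA,\bB)$, where $\bA$ is the diagonal subspace and $\bB = C_1 \times \cdots \times C_M$. First I would observe that $\bA$ is a closed linear subspace of $\bX$, which is precisely the hypothesis required by \cref{t:affcvx}; moreover the assumption \cref{e:gap'} is exactly the analog of \cref{def:g} applied to $(\bA,\bB)$, so $\bE = \bA \cap (\bB-\bg) \neq \varnothing$ by \cref{e:rela}. Hence the governing operator $\bT = \Id - P_\bA + P_\bB R_\bA$ in $\bX$ satisfies the hypotheses of \cref{t:affcvx}\ref{t:affcvx:i}.

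Applying that theorem to the iterates $(\bT^n \bx)_\nnn$ immediately yields weak convergence of the shadow sequence $(P_\bA \bT^n \bx)_\nnn$ to some $\bar\bx \in \bE$. Because every element of $\bA$ has equal entries, $\bar\bx$ must be of the form $(\bar x,\ldots,\bar x)$, and \cref{e:rela} then gives $\bar x \in \bigcap_{j=1}^M (C_j - g_j)$. In particular, $\bar\bx \in \bA \cap (\bB-\bg)$, settling the first half of the conclusion.

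It remains to show that $\bar x$ is a least-squares solution of \cref{e:moresets}. For this I would use the reformulation recalled in \cref{e:prob'}, namely that minimizing $x \mapsto \sum_{i=1}^M d^2_{C_i}(x)$ is equivalent (up to the factor $M$) to minimizing $\bx \mapsto d^2_\bB(\bx)$ over $\bx \in \bA$, via the identification $x \mapsto (x,\ldots,x)$. The minimum value of $d_\bB$ on $\bA$ is $\|\bg\|$, attained precisely on $\bA \cap (\bB-\bg) = \bE$; this is the content of the extremal characterization \cref{e:waterloo}, since any $x \in \bigcap_j (C_j - w_j)$ realizes $d_\bB((x,\ldots,x)) \le (\sum_j \|w_j\|^2)^{1/2}$. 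Consequently any $\bar x$ with $(\bar x,\ldots,\bar x) \in \bE$ is a least-squares solution, and this applies to the $\bar x$ produced above.

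The main point requiring care is the least-squares identification in the last paragraph: one must link the minimizers of $\sum_i d^2_{C_i}$ on $X$ with the set $\bE$ on the product side, which is exactly what \cref{e:waterloo} encodes through the minimality of $\|\bg\|$ among admissible translations. Beyond this, the argument is a direct specialization of \cref{t:affcvx}\ref{t:affcvx:i}, with no further analytic work needed since $\bA$ is a subspace (so the affineness hypothesis of \cref{t:affcvx} is automatic) and the gap vector $\bg$ is provided by assumption \cref{e:gap'}.
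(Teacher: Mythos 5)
Your proposal is correct and follows exactly the paper's (one-line) proof: apply \cref{t:affcvx}\ref{t:affcvx:i} to the pair $(\bA,\bB)$ in the product space, then use \cref{e:rela} to identify the diagonal limit and \cref{e:waterloo} to certify the least-squares property. You have merely written out the details that the paper leaves implicit, and they check out.
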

\begin{proof}
Combine \cref{t:affcvx} with 
\eqref{e:waterloo} and \cref{e:rela}.
\end{proof}
\begin{figure}[h]
\label{fig:many}
\begin{center}
\includegraphics[width=0.7\columnwidth]{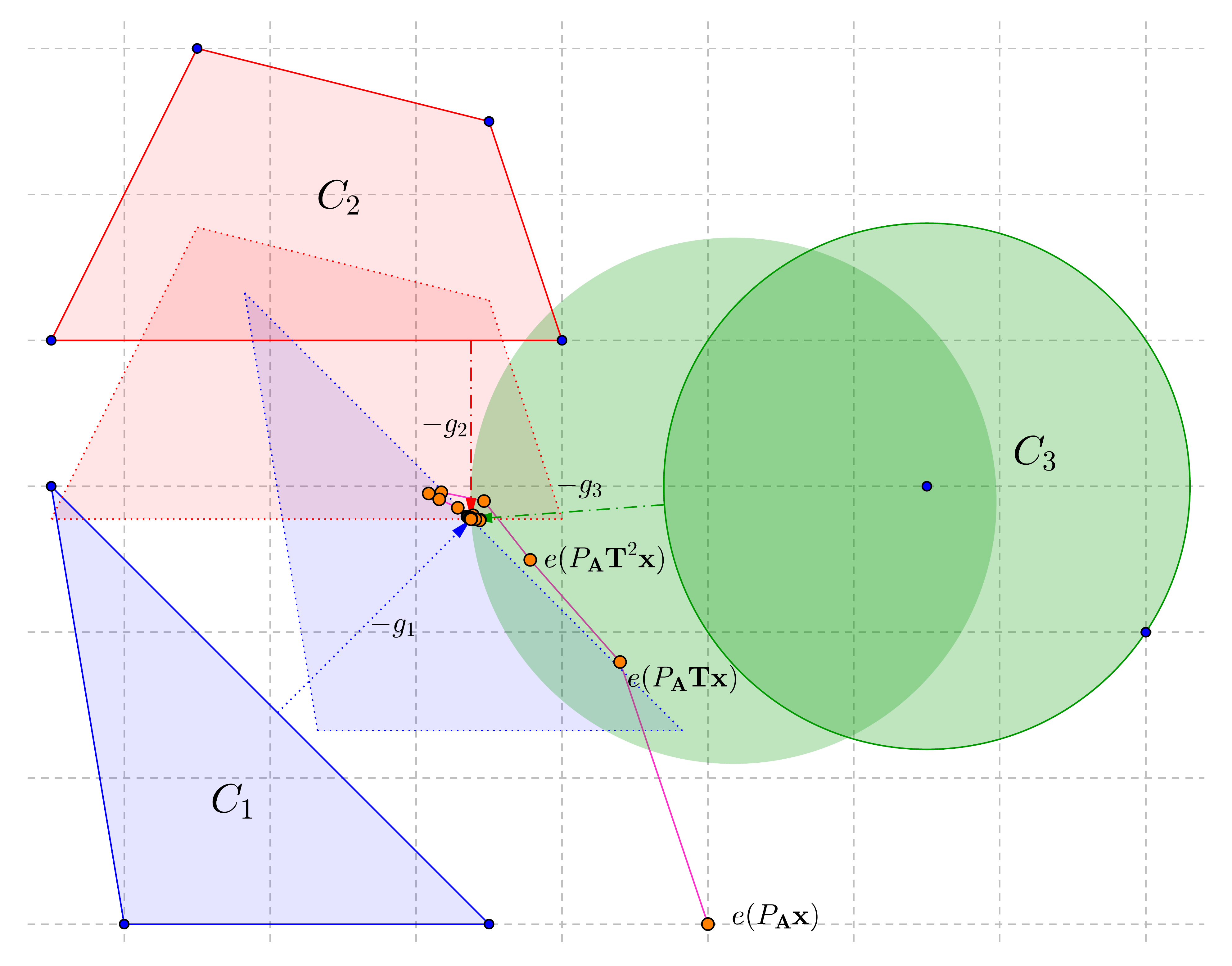}
\caption{A \texttt{GeoGebra} \cite{geogebra}
snapshot that illustrates \cref{c:moresets}.
Three nonintersecting
 closed convex sets, $C_1$ (the blue triangle),
 $C_2$ (the red polygon) and $C_3$ (the green circle), are shown 
along with their translations forming 
the generalized intersection. 
The first few terms of the 
 sequence $(e(P_\bA\bT^n\bx))_\nnn$ (yellow points)
 are also depicted. Here $e: \bA \to \RR^2: (x, x, x) \mapsto x$.}
\end{center}
\end{figure}

\begin{remark}
When we particularize \cref{c:moresets} from convex sets to 
\emph{halfspaces} and $X$ is \emph{finite-dimensional}, 
we recover 
Spingarn's \cite[Theorem~1]{Spi87}.
Note that in this case, in view of \cite[Facts~5.1(ii)]{BB94}
 we have $\bg\in \bB-\bA$. 
Recall that Spingarn used the following version of his
\emph{method of partial inverses} from \cite{Spi83}:
\begin{equation}
\label{e:MPI}
(\ba_0, \bb_0) \in \bA\times \bA^\perp 
\quad\text{and}\quad (\forall\nnn) \quad
\begin{cases}
\ba'_n =P_\bB(\ba_n +\bb_n), & \bb'_n =\ba_n +\bb_n -\ba'_n, \\
\ba_{n+1} =P_\bA\ba'_n, & \bb_{n+1} =\bb'_n -P_\bA\bb'_n.
\end{cases} 
\end{equation}
This method is the DRA in $\bX$, 
applied to $\bA$ and $\bB$ with starting point $(\ba_0 -\bb_0)$
(see, e.g., \cite[Lemma~2.17]{BDNP15}).
\end{remark}
\vskip 8mm

\end{document}